\def\FC{\mathcal{F}}
\def\E{\mathbf{E}}
\def\N{\mathbf{N}}
\def\P{\mathbf{P}}
\def\R{\mathbf{R}}
\def\Z{\mathbf{Z}}
\def\1{\mathbf{1}}
\def\tr{\rm{tr}}
\def\al{\alpha}
\def\be{\beta}
\def\pa{\partial}
\def\ep{\epsilon}
\def\de{\delta}
\newcommand{\si}{\sigma}
\newcommand{\De}{\Delta}
\newcommand{\Si}{\Sigma}
\newcommand{\Om}{\Omega}
\newcommand{\La}{\Lambda}
\newtheorem{prop}{Proposition}[section]
\newtheorem{theorem}{Theorem}[section]
\newtheorem{lemma}{Lemma}[section]
\newtheorem{remark}{Remark}
\begin{document}
\title{CTRW approximations for fractional equations with variable order}

\author{V. N. Kolokoltsov\footnote{Department of Statistics, University of Warwick, Coventry CV4 7AL UK,}
$^,$\footnote{ National Research University Higher School of Economics, Moscow}
\footnote{Email: v.kolokoltsov@warwick.ac.uk and v.n.kolokoltsov@gmail.com}
}

\maketitle

\begin{abstract}
The standard diffusion processes are known to be obtained as the limits of appropriate
random walks. These prelimiting random walks can be quite different however.
The diffusion coefficient can be made responsible for the size of jumps
or for the intensity of jumps. The "rough" diffusion limit does not feel the difference.
The situation changes, if we model jump-type approximations via CTRW with non-exponential waiting times.
If we make the diffusion coefficient responsible for the size of jumps and take
waiting times from the domain of attraction of an $\alpha$-stable law with a constant intensity $\al$,
then the standard scaling would lead in the limit
of small jumps and large intensities to the most standard fractional diffusion equation.
However, if we choose the CTRW approximations with fixed jump sizes and use the diffusion coefficient
to distinguish intensities at different points, then we obtain in
the limit the equations with variable position-dependent fractional derivatives.
In this paper we build rigorously these approximations and prove their convergence
to the corresponding fractional equations for the cases of multidimensional diffusions
and more general Feller processes.
\end{abstract}

{\bf Key words:} variable order fractional equations,
continuous time random walks (CTRW), subordinated Markov processes.

\section{Introduction}

The standard diffusion equation
\begin{equation}
\label{eqdifeq}
\frac{\pa u}{\pa t}(t,x) =\frac12 a(x) \frac{\pa ^2 u}{\pa x^2}(t,x),
\end{equation}
with a positive continuous function $a(x)$,
and the corresponding diffusion process are known to be obtained as the limit of appropriate
random walks. These prelimiting random walks can be quite different however. For example,
approximating random walks in continuous time can be chosen as jump type processes with the generators
\[
L_hf(x)=\frac{1}{2h^2}[f(x+ h\sqrt{a(x)})+f(x- h\sqrt{a(x)})-2f(x)],
\]
or as jump type processes with the generators
\[
\tilde L_hf(x)=\frac{a(x)}{2h^2}[f(x+ h)+f(x- h)-2f(x)],
\]
since both $L_h$ and $\tilde L_h$ tend to $(1/2)a(x)(d^2/dx^2)$, as $h\to 0$.
In the first approximation, the diffusion coefficient $a(x)$ is responsible for the size of jumps (with constant intensity),
and in the second approximation, it is responsible for the intensity of jumps (with constant sizes).
The "rough" diffusion limit does not feel the difference.
The situation changes, if we model jump-type approximations via CTRW with non-exponential waiting times.
If we make the diffusion coefficient responsible for the size of jumps and take
waiting times from the domain of attraction of an $\alpha$-stable law with a constant intensity $\al$,
then the standard scaling would lead (in the limit
of small jumps and large intensities) to the most standard fractional diffusion equation

\begin{equation}
\label{eqdifeqfr}
D^{\al}_{0+*} u(t,x) =\frac12 a(x) \frac{\pa ^2 u}{\pa x^2}(t,x),
\end{equation}
where $D^{\al}_{0+*}$ is the so-called Caputo-Dzerbashyan fractional derivative.

However, if we will use the CTRW approximations with fixed jump sizes and use $a(x)$
to distinguish intensities at different points, then (as will be shown) we get in
the limit the equation with a variable position-dependent fractional derivative:
\begin{equation}
\label{eqdifeqfr1}
D^{\al a(x)}_{0+*} u(t,x) =\frac12 \frac{\pa ^2 u}{\pa x^2}(t,x).
\end{equation}

Of course, for any decomposition $a(x)=b(x)c(x)$ with positive $b(x),c(x)$, one can use
$b(x)$ as a function controlling the intensity of jumps and $c(x)$ as a function controlling
the spread of the jumps. In this scenario we get in the limit the equation
\begin{equation}
\label{eqdifeqfr2}
D^{\al b(x)}_{0+*} u(t,x) =\frac12 c(x) \frac{\pa ^2 u}{\pa x^2}(t,x).
\end{equation}

From this point of view, all equations \eqref{eqdifeqfr2} are equally
legitimate fractional extensions of the diffusion equation \eqref{eqdifeq}.

Similarly (though technically more cumbersome), for a time dependent diffusion equation
\begin{equation}
\label{eqdifeqnonhom}
\frac{\pa u}{\pa t}(t,x) =\frac12 a(t,x) \frac{\pa ^2 u}{\pa x^2}(t,x),
\end{equation}
factorising $a(t,x)=b(t,x)c(t,x)$ we get by the CTRW approximation the fractional equation with variable order,
which is time- and position-dependent:
\begin{equation}
\label{eqdifeqnonhomfr}
D^{\al b(t,x)}_{0+*} u(t,x) =\frac12 c(t,x) \frac{\pa ^2 u}{\pa x^2}(t,x).
\end{equation}

In this paper we aim to develop these CTRW approximations in a general multidimensional case.
Namely, let us look at the CTRW approximations and possible corresponding fractional limits
for the diffusion processes generated by the operator
\begin{equation}
\label{eqdifoper}
L_{dif}f(x) =\frac12 {\tr} \left(G(x) \frac{\pa ^2 f}{\pa x^2}(x)\right)
=\frac12 \sum_{ij} G_{ij}(x) \frac{\pa ^2 f}{\pa x_i\pa x_j}(x), \quad x\in \R^d,
\end{equation}
with $G(x)$ a positive $d\times d$-matrix,
and stable-like processes generated by the operator
\begin{equation}
\label{eqstaoper}
L_{st}^{\be}f(x) =\int_{S^{d-1}}|(s, \nabla f(x)|^{\be} \mu (x,s)\, ds, \quad x\in \R^d,
\end{equation}
with $\be\in (0,2)$, $\mu (x,s)$ an even in $s$ positive function on $\R^d\times S^{d-1}$ and
$ds$ Lebesgue measure on the unit sphere $S^{d-1}$.   In particular, if $\mu(x,s)=1$
(that is, the spectral measure $\mu(x,s) \, ds$ is uniform), then
$L_{st}^{\be}=\si |\De|^{\be/2}$ (with $\si$ a constant depending on $d$) becomes a standard fractional Laplacian.
To avoid unnecessary repetitions, we shall speak about $L_{st}^{\be}$ with $\be\in (0,2]$ assuming that for
$\be=2$ the diffusion operator \eqref{eqdifoper} is meant.

We shall look at the CTRW approximations arising from the equations
\begin{equation}
\label{eqstaeq}
\frac{\pa u}{\pa t}(t,x) =a(t,x) L_{st}^{\be} u(t,x),
\end{equation}
with some positive function $a(x)$. We stress that the separation of a multiplier $a(t,x)$
for a diffusion or a stable generator is not intrinsic, it can be done in arbitrary way. Once the choice
is made, we shall analyse the jump-type approximations to the processes governed by $a(t,x) L_{st}^{\be}$,
where $a(t,x)$ stands for the intensity of jumps and
$L_{st}^{\be}$ stands for the distribution of jumps.
We shall show that the limiting processes are governed by the equations of type
\[
D^{\al a(t,x)}_{0+*} u(t,x) =L_{st}^{\be}u(t,x),
\]
extending \eqref{eqdifeqnonhomfr}.

Our results are extendable to equations
\begin{equation}
\label{eqstaeqgen}
\frac{\pa u}{\pa t}(t,x) =a(t,x) L u(t,x),
\end{equation}
with a generator of a general Feller process. We shall mostly work with \eqref{eqstaeq},
where concrete assumptions on coefficients are easy to check, and then comment on regularity
assumptions required for a general $L$.

The CTRW models summing independent terms were analysed mathematically in many papers,
see e. g. \cite{Meer09},
following their development in physics literature, see e.g. \cite{Zas94}.
The CTRW approximation for general models of Markov processes subordinated
by their monotone coordinate turning in the scaling limit to a generalised fractional equation
(of variable order) were seemingly first discussed mathematically in \cite{Ko09}.
There is now a heavy body of literature on the variable order fractional equations.
We are not trying to review it here, but rather refer to extensive reviews
\cite{Ding21} and \cite{Sun} dealing with both mathematical and applied questions.
Let us mention specifically paper \cite{Straka}, where CTRW approximations
to equation \eqref{eqdifeqfr1} were discussed in detail, and paper \cite{SavToa}, with careful derivation of
well-posedness for rather general variable order fractional equations.
As basic references to general rules of fractional calculus and its applications we mention
books \cite{Kir94}, \cite{Podlub99}, \cite{Scalabook}.

The rest of the paper is organised as follows. In Section \ref{secCTRW}
we introduce carefully various CTRW approximations for diffusion or stable-like processes, when
their coefficients can be used either to distinguish the sizes of jumps or
their intensities. In Section \ref{secres} our main results are formulated devoted to
the convergence of these approximations to processes solving fractional equations
of variable order, and to the explicit representations of the solutions of these equations.
The main result is actually Theorem \ref{th2} on the CTRW approximations and specific representation
for their limit. The proof is based on the ideas from \cite{Ko09}, where they were presented
however rather sketchy. Theorem \ref{th3} is essentially known.
Its detailed proof is given in \cite{SavToa}. We present here a different proof adding two details.
Namely, our equation is a bit more general (variable fractional order depends on both space and time), and
moreover we prove well-posedness of classical solutions (unlike only the mild solutions of \cite{SavToa}).
In Section \ref{secrepsub} we derive a handy general representation for the distributions
of Markov processes subordinated by its monotone coordinate. It supplies an important
ingredient to the proof of our main results, but may be of independent interest.
Section \ref{secrates} presents a well known result on the convergence of
the standard CTRWs, but enhanced by the exact rates of (weak) convergence.
Sections \ref{secpr1} - \ref{secpr3} contain proofs of our main results.

Notice finally that similar approach to interacting particle systems leading
to the new variable order fractional kinetic equations was developed in
\cite{KolMal} and \cite{KoTro}. Quite different variable order kinetic equations
are analysed in \cite{Fed}.

We shall use standard notations for the spaces of smooth function: $C(\R^d)$
the Banach space of continuous functions equipped with sup-norm,
$C^k(\R^d)$, $k\in \N$, the Banach space of
$k$-times continuously differentiable functions with bounded derivative of
order up to $k$,  $C_{\infty}(\R^d)$ the closed subspace of $C(\R^d)$
consisting of functions vanishing at infinity, $C^k_{\infty}(\R^d)$ the closed subspace
of $C^k(\R^d)$ consisting of functions such that all its derivatives up to order $k$
belong to $C(\R^d)$.

\section{CTRW approximation}
\label{secCTRW}

We are looking for the CTRW approximations to the equations \eqref{eqstaeq}.

For the operator $L_{st}^{\be}$, we choose standard random walk approximations
\begin{equation}
\label{eqstaoperap}
L_{st,\tau}^{\be}f(x)=\frac{1}{\tau}\int (f(x+\tau^{1/\be}z)-f(x)) p(x, dz),
\end{equation}
with $p(x,dz)$ a symmetric probability kernel in $\R^d$
(i.e. $\int f(z) p(x,dz)=\int f(-z) p(x,dz)$), so that
\begin{equation}
\label{eqstaoperap1}
\sup_x |(L_{st}^{\be}-L_{st,\tau}^{\be})f(x)| \to 0, \quad \text{as} \quad \tau\to 0,
\end{equation}
uniformly for $f$ from any bounded subset of $C^2(\R^d)$.

The operator  $L_{st,\tau}^{\be}$ generates a jump-type Markov process with the intensity $\tau^{-1}$
and the distribution of jumps given by $p(x, dz)$.

\begin{remark}
Of course, such approximations are not unique. The existence is however more or less obvious. For the diffusion operator
\eqref{eqdifoper} the measures $p(x,dz)$ can be (and usually are) taken as discrete, for instance,
of the type
 \[
 \sum_i(\de_{a_ie_i}+\de_{-a_ie_i})+\sum_{i>j}(\de_{b_{ij}(e_i+e_j)}+\de_{-b_{ij}(e_i+e_j)}),
 \]
with some numbers $a_i, b_{ij}$, where $\{e_i\}$ is the standard basis in $\R^d$. For the stable generator
one can take as $p(x,dz)$ a probability measure with an appropriate power tail,
see e.g. \cite{Meer09} or Section 8.3 from \cite{Ko11}.
\end{remark}

For a generator $L$ of an arbitrary Feller process with a core $C^k_{\infty}(\R^d)$, where $k$ equals one or two,
on can use an arbitrary approximation $L_{\tau}$ of the type
\begin{equation}
\label{eqgenoperap}
L_{\tau} f(x)=\frac{1}{\tau}\int (f(x+z)-f(x)) p_{\tau}(x, dz),
\end{equation}
with $p_{\tau}(x,dz)$ a family of probability kernels in $\R^d$ such that
\begin{equation}
\label{eqgenoperap1}
\sup_x |(L-L_{\tau})f(x)| \to 0, \quad \text{as} \quad \tau\to 0,
\end{equation}
uniformly for $f$ from any bounded subset of $C^k_{\infty}(\R^d)$.

We shall now construct a process with waiting times having power tails. For exponential waiting times,
the intensity $a(s,x)$ governs the tail distribution $\P(T>t)=e^{-ta(s,x)\tau}$ with a scaling parameter $\tau$.
Hence, for the power tail the analogous dependence will be the power law
\begin{equation}
\label{eqpowertail}
\P(T>t)\sim \frac{1}{a(s,x)\al} t^{-a(s,x)\al }, \quad t\to \infty.
\end{equation}

To simplify presentation we shall make \eqref{eqpowertail} more precise. Namely, we assume
that these distributions have continuous densities $Q_{s,x}(r)$ such that
\begin{equation}
\label{eqpowertail1}
Q_{s,x}(r)=r^{-1-a(s,x) \al} \,\, \text{for} \,\, r\ge B, \,\, \text{and} \,\, Q_{s,x}(r) \le 1 \,\, \text{for all} \,\, r,
\end{equation}
with some $B>0$ uniformly for all $s,x$.

\begin{remark}
(i) The coefficients at the power of $t$ in \eqref{eqpowertail} are chosen for convenience.
If not used here, then it would appear in \eqref{eqpowertail1} and
in the limiting equation. (ii) Simplification arising from \eqref{eqpowertail1}
is not essential and can be dispensed with.
\end{remark}

As usual for CTRW modelling, we start by building an auxiliary scaled enhanced Markov chain
with spatial jumps distributed according to $p(x,dz)$ and with an additional
coordinate $r$ specifying the total waiting times. In the usual CTRW scaling
(see e.g. \cite{Meer09} and \cite{Ko11}) one scales discrete times by $\tau$
and the waiting times by the multiplier $\tau^{1/(\al a(s,x))}$ (see also \eqref{eq10apprrateCTRW}).  Therefore,
we define the Markov chain $(X^{\tau}_{x,s}, S^{\tau}_{x,s})(k\tau)$ evolving in discrete times $\tau k$,
$k\in \N$, such that $(X^{\tau}_{x,s}, S^{\tau}_{x,s})(0)=(x,s)$ and,
if a state of the chain in some discrete
time $k\tau$ is $(q,v)$, then in the next moment $(k+1)\tau$ it will turn to the state
\[
(q+\tau^{1/\be}y, v+\tau^{1/(\al a(v,q))}r),
\]
with $y$ distributed according to $p(x,dy)$ and $r$ distributed according to $Q_{v,q}$.
We obtain the Markov chain with transition operators in time $\tau$ given by the formula
\begin{equation}
\label{eqtransitionenh}
U^{\tau}F(x,s)= \int_{\R_+}\int_{\R^d} F(x+\tau^{1/\be} y,  s+\tau^{1/(\al  a(s,x))} r) Q_{s,x}(r) \, dr \,  p(x,dy).
\end{equation}

When working with an arbitrary generator $L$ (rather than more concrete $L^{\be}_{st}$) the corresponding
chain is defined by the equation
\begin{equation}
\label{eqtransitionenhgen}
U^{\tau}F(x,s)= \int_{\R_+}\int_{\R^d} F(x+y,  s+\tau^{1/(\al  a(s,x))} r) Q_{s,x}(r) \, dr \,  p_{\tau}(x,dy).
\end{equation}

We are interested in the value of the first coordinate $X^{\tau}_{x,s}$
evaluated at the random time $k\tau$ such that the total waiting time $S^{\tau}_{x,s}(k\tau)$
reaches $t$, that is, at the time
\[
k\tau =T_{x,s}^{\tau}(t)=\inf \{ m\tau: S^{\tau}_{x,s} (m\tau) \ge t\},
\]
so that $T_{x,s}^{\tau}$ is the inverse process to $S^{\tau}_{x,s}$.
We define the {\it scaled CTRW approximation for the process with the intensity of jumps governed by $a(x)$
and the distribution of jumps governed by $p(x,dy)$} as the (non-Markovian) process
 \begin{equation}
\label{scaledmfchainwithtail}
\tilde X^{\tau}_{x,s}(t)= X^{\tau}_{x,s}(T_{x,s}^{\tau}(t)).
\end{equation}

We are going to identify the weak limit of the process $\tilde X^{\tau}_{x,s}(t)$, as $\tau\to 0$,
and to show that the limiting processes are governed by the equations extending
\eqref{eqdifeqnonhomfr}.
As an auxiliary step, we shall identify the weak limit of the Markov chain $(X^{\tau}_{x,s}, S^{\tau}_{x,s})$,
that is, the weak limit of the chains with transitions
 $[U^{\tau}]^{[t/\tau]}$, where $[t/\tau]$ denotes the integer
 part of the number $t/\tau$, as $\tau\to 0$.
It is known (see e.g. Theorem 19.28 of \cite{Kal} or Theorem 8.1.1 of \cite{Ko11})
that if such chain converges to a Feller process, then the generator of this limiting process
can be obtained as the limit
 \begin{equation}
\label{scalingMchains}
\La F= \lim_{\tau\to 0} \frac{1}{\tau}(U^{\tau}F-F).
\end{equation}

\section{Main results}
\label{secres}

Let us assume the strict non-degeneracy condition:
\begin{equation}
\label{eqcondnondeg}
g_1 \le G(x) \le g_2, \,\, \text{or} \,\,  m_1\le m(x,s)\le m_2
\end{equation}
for the spatial part and
\begin{equation}
\label{eqcondnondeg1}
a_1\le a(t,x) \le a_2
\end{equation}
for the temporal part,
with some positive constants $g_i, m_i, a_i$ such that $a_2 \al <1$,
where the first inequality \eqref{eqcondnondeg} is understood in the sense of matrix,
and the smoothness condition:
\begin{equation}
\label{eqcondsmoo}
a(.), G(.), m(.,s)\in C^4(\R^d),
\end{equation}
with $m(.,s)$ having a bounded norm uniformly in $s$. Let us assume
\eqref{eqstaoperap1} for the approximating jumps and \eqref{eqpowertail1}
for the distributions of waiting times.

\begin{theorem}
\label{th1}
(i) For $F\in C^2_{\infty}(R^{d+1})$ the limit in \eqref{scalingMchains} exists and
\[
\La F(x,s)= \lim_{\tau\to 0} \frac{1}{\tau}(U^{\tau}F-F)(x,s)
\]
\begin{equation}
\label{eq1th1}
=\int_0^{\infty} \frac {F(x, s+r)-F(x,s)}{r^{1+\al a(s,x)}} dr
+L_{st}^{\be}F(x,s).
\end{equation}

(ii) The operator $\La$ defined by the r.h.s. of \eqref{eq1th1} generates
a (conservative time-homogeneous) Feller process $(X_{x,s}, S_{x,s})(t)$
in $\R^{d+1}$ (with the initial condition $(X_{x,s}, S_{x,s})(0)=(x,s)$)
with continuous (in fact, even smooth) transition probability density
 $G(t;x,s; y,v)$, $t>0$,
and the corresponding  Feller semigroup in $C_{\infty}(\R^{d+1})$
such that the space $C^2_{\infty}(\R^{d+1})$ is its invariant core. The process
$(X_{x,s}, S_{x,s})(t)$ belongs to the class of processes usually referred to as stable-like processes.

(iii) The chains with transitions $[U^{\tau}]^{[t/\tau]}$, with $U^{\tau}$
given by \eqref{eqtransitionenh}, converge weakly to the process  $(X_{x,s}, S_{x,s})(t)$,
and the operators $[U^{\tau}]^{[t/\tau]}$ converge to the transition operators of the process
$(X_{x,s}, S_{x,s})(t)$ strongly and uniformly for compact intervals of time.
\end{theorem}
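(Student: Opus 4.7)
The plan for part (i) is to decompose the increment $U^{\tau}F-F$ along the spatial and temporal scalings separately. Using that $Q_{s,x}(r)\,dr$ is a probability measure, I would write
\[
U^{\tau}F(x,s)-F(x,s) = A_{\tau}(x,s)+B_{\tau}(x,s),
\]
where $A_{\tau}(x,s)=\int_{\R^d}[F(x+\tau^{1/\be}y,s)-F(x,s)]\,p(x,dy)$ and $B_{\tau}$ collects the remaining $s$-increments. Then $A_{\tau}/\tau = L_{st,\tau}^{\be}F(\cdot,s)(x) \to L_{st}^{\be}F(x,s)$ uniformly by \eqref{eqstaoperap1}. For $B_{\tau}$ I split the $r$-integral at the threshold $B$ of \eqref{eqpowertail1}. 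On $\{r\ge B\}$, the substitution $u=\tau^{1/(\al a(s,x))}r$ together with $Q_{s,x}(r)=r^{-1-\al a(s,x)}$ yields $Q_{s,x}(r)\,dr = \tau\,u^{-1-\al a(s,x)}\,du$; after division by $\tau$ and passing $\tau\to 0$, dominated convergence (with majorant $\min(2\|F\|,C u)\cdot u^{-1-\al a(s,x)}$, integrable on $(0,\infty)$ because $\al a_2<1$) and continuity of $F$ deliver exactly the fractional integral in \eqref{eq1th1}. On $\{r<B\}$ the difference $F(x+\tau^{1/\be}y,s+\tau^{1/(\al a(s,x))}r)-F(x+\tau^{1/\be}y,s)$ is bounded by $\|\pa_s F\|_\infty\,\tau^{1/(\al a(s,x))}\,r$, and the corresponding contribution is at most $\|\pa_s F\|_\infty B\,\tau^{1/(\al a(s,x))}$, which divided by $\tau$ tends to $0$ thanks once again to $\al a_2<1$ from \eqref{eqcondnondeg1}.

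For part (ii), I would recognise $\La$ as a stable-like operator on $\R^{d+1}$: a positive Riemann--Liouville-type kernel in the $s$-variable of spatially varying order $\al a(s,x)\in (\al a_1,\al a_2)\subset(0,1)$ with uniformly non-degenerate intensity, plus the operator $L_{st}^{\be}$ in $x$ satisfying the uniform non-degeneracy \eqref{eqcondnondeg}. Under the $C^4$ smoothness \eqref{eqcondsmoo} of all coefficients, existing results on stable-like processes---for instance those collected in Chapters 5 and 7 of \cite{Ko11}---yield a Feller semigroup on $C_{\infty}(\R^{d+1})$ generated by $\La$, with $C^2_{\infty}(\R^{d+1})$ as an invariant core and a smooth transition density $G(t;x,s;y,v)$ for $t>0$. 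The $s$-kernel is one-sided rather than symmetric, but this is precisely the subordinator/stable-like setting handled analogously in that framework; the non-degeneracy of the $x$-part secures the joint smoothing of the density.

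Part (iii) then follows by combining (i) and (ii): generator convergence $(U^{\tau}-I)/\tau \to \La$ holds on the invariant core $C^2_{\infty}(\R^{d+1})$ by (i), and $\La$ generates the Feller semigroup identified in (ii). The standard scheme for Markov chain approximations to Feller semigroups (Theorem 19.28 of \cite{Kal}, or Theorem 8.1.1 of \cite{Ko11}) delivers strong convergence of $[U^{\tau}]^{[t/\tau]}$ to the limit semigroup, uniformly on compact time intervals, and weak convergence of the chains $(X^{\tau}_{x,s},S^{\tau}_{x,s})$ to $(X_{x,s},S_{x,s})$. The main technical obstacle is the generator calculation of (i): the base-point-dependent exponent $\tau^{1/(\al a(s,x))}$ cannot be pulled out of the integral, so the change of variables has to be performed pointwise in $(s,x)$ and then shown to be uniform in the base point using the explicit tail form \eqref{eqpowertail1} and the strict inequality $\al a_2<1$, both of which are imposed precisely to make those uniformities transparent.
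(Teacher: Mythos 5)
Your proposal follows the same skeleton as the paper's own proof: the same splitting of $U^{\tau}F-F$ into a spatial increment handled by \eqref{eqstaoperap1} and a temporal increment (your $B_{\tau}$ simply merges the paper's pure temporal term and its cross remainder $R$ in \eqref{scalingMarlift} into one object), the same citation-based treatment of part (ii) via stable-like process theory (\cite{Ko00}, Chapter 7 of \cite{Ko11}; the paper additionally invokes Theorems 5.2.1 and 5.2.2 of \cite{Kobook19} for generation and the invariant core), and the identical appeal to Theorem 19.28 of \cite{Kal} for part (iii). The genuine difference lies in how the temporal limit is justified. The paper channels it through Proposition \ref{apprrateCTRW}, whose conclusion \eqref{eq10apprrateCTRW} is a \emph{quantitative} bound: the error is at most $C_B L \tau^{1/(\al a(s,x))-1}\le C_B L \tau^{1/(\al a_2)-1}$, uniformly in the base point $(x,s)$ because $\al a(s,x)\le \al a_2<1$. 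You instead perform the substitution $u=\tau^{1/(\al a(s,x))}r$ and apply dominated convergence. Your computation is correct, including the key identity $Q_{s,x}(r)\,dr=\tau\, u^{-1-\al a(s,x)}\,du$ on $\{r\ge B\}$ and the disposal of $\{r<B\}$ using $1/(\al a_2)-1>0$; but dominated convergence, as stated, yields only pointwise convergence in $(x,s)$, whereas part (iii) requires generator convergence in the sup norm on a core --- that is precisely the hypothesis of Kallenberg's theorem. You flag this issue yourself in your closing remarks, and the repair is exactly to replace dominated convergence by explicit error estimates (e.g., the missing piece of the integral over $(0,B\tau^{1/(\al a(s,x))})$ is bounded by $\|\pa_s F\|\,(B\tau^{1/(\al a(s,x))})^{1-\al a(s,x)}/(1-\al a(s,x))$), which amounts to reconstructing Proposition \ref{apprrateCTRW}. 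So the paper's route buys the uniformity essentially for free, together with explicit rates that are re-used later (in the proof of Theorem \ref{th2}); your route is more elementary pointwise, but leaves that quantitative step as an outstanding item rather than a completed argument.

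One small correction to your part (ii): the smoothing of $G(t;x,s;y,v)$ in the $s$-direction comes from the non-degeneracy $a(t,x)\ge a_1>0$ of the one-sided temporal kernel, i.e., from \eqref{eqcondnondeg1}, not from the non-degeneracy \eqref{eqcondnondeg} of the $x$-part, which can only smooth in the $x$-directions; joint smoothness of the density requires both conditions acting in their respective variables.
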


 \begin{theorem}
\label{th2}
(i) The marginal distributions of the scaled CTRW  \eqref{scaledmfchainwithtail} converge
 to the marginal distributions of the process
 \begin{equation}
\label{eq1th2}
\tilde X_{x,s}(t)= X_{x,s}(T_{x,s}(t)),
\end{equation}
where
\[
T_{x,s}(t)=\inf \{r: S_{x,s} (r) \ge t\},
\]
that is, for a bounded continuous function $F(x)$, it holds that
 \begin{equation}
\label{eq1ath2}
\lim_{\tau \to 0}  \E F(\tilde X^{\tau}_{x,s}(t))= \E F(\tilde X_{x,s}(t)).
\end{equation}

(ii) For any $F\in C_{\infty}(\R^d)$ and arbitrary $K>0$,
 \[
\E [F(\tilde X_{x,s}(t))\1(T_{x,s}(t)\in [1/K,K])]
\]
\begin{equation}
 \label{eq20th2}
 =\int dy \int_{1/K}^K du \int_s^t dv \frac{(t-v)^{-a(v,y)\al}}{a(v,y)\al} \, G(u;x,s; y,v) F(y),
\end{equation}
where $\theta_{\ge t}$ is the indicator function of the interval $[t,\infty)$.

This formula extends
to infinite $K$, that is
\begin{equation}
 \label{eq21th2}
\E [F(\tilde X_{x,s}(t))]
 =\int dy \int_0^{\infty} du \int_s^t dv \frac{(t-v)^{-a(v,y)\al}}{a(v,y)\al} \, G(u;x,s; y,v) F(y).
\end{equation}
\end{theorem}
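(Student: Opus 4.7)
The plan is to first establish (ii) by exploiting the additive split of the limiting generator (\ref{eq1th1}) and the resulting independence of the $X$- and $S$-jumps, and then derive (i) from (ii) using Theorem \ref{th1}(iii). The generator (\ref{eq1th1}) consists of a state-dependent subordinator piece, acting on $s$ with L\'evy measure $r^{-1-a(s,x)\al}\, dr$, added to a purely $x$-acting operator $L_{st}^{\be}$. Consequently the two coordinates of $(X_{x,s}, S_{x,s})$ never jump simultaneously, so at any $S$-jump time $X$ is continuous; this is exactly the setting of the representation for subordinated Markov processes developed in Section \ref{secrepsub}.

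For (ii), I decompose the event $\{T_{x,s}(t) \in du,\, X_{x,s}(T_{x,s}(t)) \in dy\}$ according to the $S$-jump that overshoots $t$. Just before time $u$ the process sits at some $(y, v)$ with $v < t$, and in $[u, u+du)$ the $S$-coordinate jumps by at least $t-v$; the infinitesimal rate of such jumps from state $(y, v)$ is
\[
\int_{t-v}^{\infty} r^{-1-a(v,y)\al}\, dr = \frac{(t-v)^{-a(v,y)\al}}{a(v,y)\al}.
\]
By the independence of $X$- and $S$-jumps, $X$ does not move during this infinitesimal interval, so $X(T(t)) = y$. Weighting by the transition density $G(u; x, s; y, v)$ from Theorem \ref{th1}(ii) and integrating over $v \in (s, t)$, $u \in [1/K, K]$, $y \in \R^d$ produces (\ref{eq20th2}). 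The passage to $K = \infty$ is a dominated-convergence argument using $T_{x,s}(t) \in (0, \infty)$ almost surely: the bound $a \le a_2$ forces the rate of $S$-jumps of size $\ge 1$ to stay above $1/(a_2\al) > 0$, so $S \to \infty$ a.s.; and $a_2\al < 1$ makes the state-dependent subordinator sublinear near $0$, so $T_{x,s}(t) > 0$ a.s.\ whenever $t > s$.

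For (i), I derive a discrete counterpart of (\ref{eq21th2}) for the CTRW. Markov decomposition at the first step $k^*$ at which $S^\tau$ crosses $t$ gives
\[
\E F(\tilde X^\tau_{x,s}(t)) = \sum_{k \ge 1} [U^\tau]^{k-1} h_k^\tau(x,s),
\]
where, with $R \sim Q_{v', y'}$,
\[
h_k^\tau(y', v') = \1(v' < t) \int p(y', dz)\, F(y' + \tau^{1/\be} z)\, \P\bigl(\tau^{1/(a(v', y')\al)} R \ge t - v'\bigr).
\]
Assumption (\ref{eqpowertail1}) turns the innermost probability into $\tau\,(t-v')^{-a(v', y')\al}/(a(v', y')\al) \cdot (1 + o(1))$, uniformly for $v' \le t - \ep$. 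Theorem \ref{th1}(iii) then provides strong convergence of $[U^\tau]^{[u/\tau]}$ on $C_\infty$, uniformly on compacts in $u$, to the limit semigroup, represented by the density $G$. The weighted sum $\tau \sum_{k\tau \in [1/K, K]}$ is a Riemann sum that converges to the integral (\ref{eq20th2}), with the continuity of $F$ absorbing the vanishing shift $\tau^{1/\be} z$.

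The hard part will be the uniform-in-$\tau$ tail control $\sup_\tau \P(T^\tau_{x,s}(t) \notin [1/K, K]) \to 0$ as $K \to \infty$, which is needed to remove the truncation. The upper tail is handled by the stable scaling of the CTRW: $S^\tau([K/\tau]\tau)$ is of order $K^{1/(a\al)}$, which exceeds $t$ with high probability as $K \to \infty$, and the quantitative rates from Section \ref{secrates} give a clean estimate. The lower tail uses that, since $a\al < 1$, the first $[1/(K\tau)]$ increments sum only to order $(1/K)^{1/(a\al)}$, so $S^\tau$ crosses $t - s$ in time $\le 1/K$ with vanishing probability. These uniform bounds permit the passage $K \to \infty$ and complete the proof of (i).
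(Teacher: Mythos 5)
Your proposal is correct and takes essentially the same route as the paper: part (ii) is the representation of Section \ref{secrepsub} (your infinitesimal overshoot-rate derivation is exactly Theorem \ref{thsubs} with $\nu(v,w;y)=w^{-1-a(v,y)\al}$, giving $\int_{t-v}^{\infty}\nu\,dw=(t-v)^{-a(v,y)\al}/(a(v,y)\al)$), and part (i) is the discrete first-crossing decomposition plus the exact tail computation from \eqref{eqpowertail1} and Riemann-sum convergence through Theorem \ref{th1}(iii), just as in Sections \ref{secsubdis} and \ref{secpr2}. The only differences are minor technical justifications (your passage to $K=\infty$ via almost sure finiteness and positivity of $T_{x,s}(t)$ plus dominated convergence, instead of the paper's appeal to integrability of $G$ in $u$ from stable-like density asymptotics; and your explicit uniform tail bounds for removing the truncation, which the paper states in one line), and these are at least as sound as the paper's versions.
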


\begin{theorem}
\label{th3}
For any $F\in C^2(\R^d)$, the evolution of averages $F(x,s)=\E F(\tilde X_{x,s}(t))$
represents the unique solution to the mixed fractional differential equation
 \begin{equation}
\label{eq2th2}
D_{t-*}^{\al a(s,x)}F(x,s)= L_{st}^{\be} F(x,s),
\quad s \in [0,t],
\end{equation}
with the terminal condition $F(x,t)=F(x)$,
where the right fractional derivative (of Caputo-Dzerbashyan type)
acting on the variable $s\le t$ of $F(x,s)$ is defined as
 \begin{equation}
\label{eq3th2}
D_{t-*}^{\al a(s,x)}g(s) =- \int_0^{t-s} \frac {g(s+r)-g(s)}{r^{1+\al a(s,x)}} dy
- (g(t)-g(s)) \frac{(t-s)^{-\al a(s,x)}}{a(s,x) \al}.
\end{equation}
\end{theorem}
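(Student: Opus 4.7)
The plan is to recognise $F(x,s)\equiv\E F(\tilde X_{x,s}(t))$ as the value function of an exit problem for the Feller process $(X_{x,s},S_{x,s})$ of Theorem \ref{th1}, and to read off \eqref{eq2th2} as the generator equation $\La F=0$ after isolating the jumps of $S$ that cross the level $s=t$. I extend $F$ to $\{s\ge t\}$ by $F(x,s):=F(x)$; this is consistent with the probabilistic definition because $T_{x,s}(t)=0$ for $s\ge t$, hence $\tilde X_{x,s}(t)=x$ and $\E F(\tilde X_{x,s}(t))=F(x)$. The necessary $C^2$ regularity on the interior $\{s<t\}$ is read off from \eqref{eq21th2}: differentiation in $(x,s)$ hits only the transition density $G$, smooth by Theorem \ref{th1}, while the boundary weight $(t-v)^{-a(v,y)\al}$ is integrable thanks to $a_2\al<1$.

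For existence I would apply the strong Markov property of $(X,S)$ at the hitting time $T=T_{x,s}(t)$: the extension gives $F(x,s)=\E_{(x,s)}[F(X(T),S(T))]$, so $r\mapsto F(X(r\wedge T),S(r\wedge T))$ is a bounded martingale, and Dynkin's formula (applied after a smooth localisation away from $\{s=t\}$ and a limit controlled by $a_2\al<1$) forces $\La F(x,s)=0$ for $s<t$. Splitting the radial integral in \eqref{eq1th1} at $r=t-s$ and substituting $F(x,s+r)=F(x)=F(x,t)$ for $r>t-s$ yields
\[
\int_0^{t-s}\frac{F(x,s+r)-F(x,s)}{r^{1+\al a(s,x)}}\,dr + (F(x,t)-F(x,s))\frac{(t-s)^{-\al a(s,x)}}{\al a(s,x)} + L_{st}^{\be}F(x,s)=0,
\]
which by the definition \eqref{eq3th2} is exactly $-D^{\al a(s,x)}_{t-*}F(x,s)+L_{st}^{\be}F(x,s)=0$, namely \eqref{eq2th2}.

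For uniqueness let $F_1,F_2\in C^2$ solve \eqref{eq2th2} with the same terminal value $F(x,t)=F(x)$, set $w=F_1-F_2$, and extend $w$ by $0$ on $\{s\ge t\}$. Reversing the above splitting turns $D_{t-*}^{\al a(s,x)}w=L_{st}^{\be}w$ into $\La w=0$ on $\{s<t\}$, so $w(X(r\wedge T),S(r\wedge T))$ is a bounded martingale. Because the L\'evy intensity $r^{-1-\al a(s,x)}$ dominates that of a non-degenerate $\al a_1$-stable subordinator, $T<\infty$ almost surely, and dominated convergence gives $w(x,s)=\E_{(x,s)}[w(X(T),S(T))]=0$, since $S(T)\ge t$ forces the integrand to vanish. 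Hence $F_1\equiv F_2$.

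The main obstacle is justifying $\La F=0$ at interior points in spite of the mild singularity of the extended $F$ across $\{s=t\}$: the extension is $C^2$ on each side but only H\"older across the interface, so it does not sit inside the core $C^2_{\infty}(\R^{d+1})$ produced by Theorem \ref{th1}. The cleanest route is to use a smooth cut-off that equals $F$ on a neighbourhood of the test point $(x_0,s_0)\in\{s<t\}$ and vanishes near the interface, apply Dynkin's formula to the cut-off, and control the discrepancy in the non-local term of $\La$ by the integrability of $r^{-1-\al a(s,x)}$ on $[t-s_0,\infty)$; the uniform bound $a_2\al<1$ is exactly what makes this limit argument converge and what makes all the tail integrals in \eqref{eq3th2} finite.
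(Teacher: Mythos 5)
Your uniqueness half is essentially the paper's own argument: Dynkin's martingale for the process stopped at the attempt to cross $\{s=t\}$, whose generator \eqref{eq6th2proof} coincides with the operator in \eqref{eq2th2} (this is precisely your ``splitting of the radial integral at $r=t-s$''), followed by optional sampling at $T_{x,s}(t)$; this is Section \ref{secpr3} almost verbatim, and it is also how the paper obtains the representation $F(x,s)=\E F(\tilde X_{x,s}(t))$ for any solution. The genuine divergence is in the existence half. The paper does not argue probabilistically there: it verifies $\tilde \La F=0$ analytically, starting from the explicit subordination formula \eqref{eq21th2}, writing $F(x,s)=\int_s^t g(x,s;v)\,dv$ with $g$ built from the transition density $G$ and the boundary weight, invoking Kolmogorov's equation $\pa G/\pa u=L_{st}^{\be}G+A_{(s;x)}G$, integrating over $u$, and justifying the final interchange of integrals with the regularization $\chi_{\ep}$ of the kernel $r^{-1-\al a(s,x)}$. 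You instead observe that the strong Markov property makes $r\mapsto F(X(r\wedge T),S(r\wedge T))$ a Doob martingale and want to read $\La F=0$ off from Dynkin's formula. Your route is shorter and more conceptual; the paper's route costs an explicit computation but uses only the continuity and smoothness of $G$ already established in Theorems \ref{th1}--\ref{th2}, which is why the paper can claim classical (not just mild) well-posedness.

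However, your existence half has a genuine gap, and it sits exactly at the point the paper's computation is designed to avoid. Since the extended $F$ is not in the core $C^2_{\infty}(\R^{d+1})$ (it is not smooth across $\{s=t\}$, and its $s$-derivatives degenerate as $s\uparrow t$), Dynkin's formula does not apply to it, and the implication ``bounded martingale plus $C^2$ regularity near the test point $\Rightarrow \La F=0$ pointwise'' is not formal. Your cut-off repair is the right idea, but after splitting $F=\psi+(F-\psi)$ with $\psi$ in the core and equal to $F$ near $(x_0,s_0)$, you still must identify the limit of $h^{-1}\E_{(x_0,s_0)}\bigl[(F-\psi)(X_h,S_h)\bigr]$ with the integral of $F-\psi$ against the L\'evy kernel of $\La$ --- a characteristic-operator (L\'evy-system) identification valid for functions that are merely bounded and continuous away from the test point. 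This fact is true for the stable-like process of Theorem \ref{th1}, but it is established neither in the paper nor in your sketch, and it is the technical heart of your argument; the ``integrability of $r^{-1-\al a(s,x)}$ on $[t-s_0,\infty)$'' that you invoke (which holds for any positive order, with or without $a_2\al<1$) only makes $\La F(x_0,s_0)$ well defined and says nothing about the convergence of the Dynkin quotient. A minor separate point: in your uniqueness argument the claimed domination of the L\'evy density of an $\al a_1$-stable subordinator by $r^{-1-\al a(s,x)}$ fails for $r>1$ (there $a(s,x)\ge a_1$ reverses the inequality); what you actually need, and have, is that the rate of jumps of $S$ of size at least $1$ is bounded below by $1/(\al a_2)$, which already forces $T<\infty$ almost surely.
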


\begin{remark}
The limiting equation \eqref{eq3th2} is written in terms of the right fractional derivative,
because we have chosen the forward motion for the auxiliary coordinate $S_{s,x}$ of the
Markov process $(X_{x,s}, S_{x,s})(t)$. The analogous equations with left derivatives,
like in \eqref{eqdifeqfr2}, can be obtained by changing the direction of $S_{s,x}$.
\end{remark}

\begin{remark}
For the case of a general operator $L$ the results above remain true (with $L$ instead of $L_{st}^{\be}$)
if we assume that the process generated by $\La$ from \eqref{eq1th1} has a continuous transition
probability density, which is sufficiently smooth so that smooth functions form a core for its
Feller semigroup, and which has integrability condition for small and large times ensuring convergence
of the integral in \eqref{eq20th2}.
\end{remark}

\section{A general representation for a subordination}
\label{secrepsub}

In this section we derive a representation for Markov processes subordinated by its monotone coordinate.
It supplies the proof for the part (ii) of Theorem \ref{th1}, but may be of independent interest.

\begin{lemma}
\label{lemsubs}
Let $Y_s$ be an adapted process on a stochastic basis $(\Om, \FC, \FC_t, P)$ and $\Si$ a stopping time.
Let the pairs $(Y_s, \Si)$ have joint densities $g_s(y, \si)$ for $s>0$ such that, for $s$ from any bounded interval
separated from zero, $g_s(y,\si)$ is bounded and right continuous in $s$.
Then, for any $K>0$ and a continuous bounded function $F(y)$,
\begin{equation}
 \label{eq1lemsubs}
\E [F(Y_{\Si})\1(\Si\in [1/K,K])]= \int dy \int_{1/K}^K d\si \, g_{\si}(y,\si) F(y).
\end{equation}
\end{lemma}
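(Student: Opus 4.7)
The plan is to discretize $\Sigma$ along a fine partition of $[1/K,K]$, use the joint-density hypothesis on each subinterval, and pass to the limit via right continuity of $g_s$ in $s$.

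First, pick a partition $\sigma_0=1/K<\sigma_1<\cdots<\sigma_N=K$ of mesh $\delta=\max_i(\sigma_{i+1}-\sigma_i)\to 0$, and split
\[
\E[F(Y_\Sigma)\1(\Sigma\in[1/K,K])]=\sum_{i=0}^{N-1}\E[F(Y_\Sigma)\1(\Sigma\in[\sigma_i,\sigma_{i+1}))].
\]
On the event $\{\Sigma\in[\sigma_i,\sigma_{i+1})\}$ I would replace $Y_\Sigma$ by $Y_{\sigma_{i+1}}$. In the intended applications $Y$ is cadlag (being a coordinate of a Feller process), so $Y_{\sigma_{i+1}}\to Y_\Sigma$ almost surely as $\delta\to 0$; combined with the continuity and boundedness of $F$ and bounded convergence, this replacement error vanishes. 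For each fixed $i$, the definition of the joint density $g_{\sigma_{i+1}}(y,\sigma)$ of $(Y_{\sigma_{i+1}},\Sigma)$ then yields
\[
\E[F(Y_{\sigma_{i+1}})\1(\Sigma\in[\sigma_i,\sigma_{i+1}))]=\int dy\int_{\sigma_i}^{\sigma_{i+1}}d\sigma\,F(y)\,g_{\sigma_{i+1}}(y,\sigma).
\]

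Summing over $i$ rewrites the total as $\int dy\int_{1/K}^{K}d\sigma\,F(y)\,g_{\phi_N(\sigma)}(y,\sigma)$, where $\phi_N(\sigma):=\sigma_{i+1}$ for $\sigma\in[\sigma_i,\sigma_{i+1})$, so that $\phi_N(\sigma)\downarrow\sigma$ as $\delta\to 0$. Right continuity of $g_s(y,\sigma)$ in $s$ gives the pointwise convergence $g_{\phi_N(\sigma)}(y,\sigma)\to g_\sigma(y,\sigma)$. The uniform boundedness of $g_s$ on $s\in[1/K,K]$ together with the elementary identity $\int dy\int_{1/K}^K d\sigma\,g_{\phi_N(\sigma)}(y,\sigma)=P(\Sigma\in[1/K,K])$ (independent of $N$) permit a Scheff\'e-type or standard dominated-convergence argument to pass to the limit and produce $\int dy\int_{1/K}^K d\sigma\,F(y)\,g_\sigma(y,\sigma)$, which is the right-hand side of \eqref{eq1lemsubs}.

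The main obstacle is the replacement of $F(Y_\Sigma)$ by $F(Y_{\sigma_{i+1}})$: this is the only step where a path-regularity assumption on $Y$ enters, and the lemma as stated does not make it explicit. An alternative that sidesteps path regularity entirely is to use the regular conditional density of $Y_s$ given $\Sigma=\sigma$, namely $g_s(y,\sigma)/p_\Sigma(\sigma)$ with $p_\Sigma(\sigma):=\int g_s(y,\sigma)\,dy$ (necessarily independent of $s$), and to exploit the right continuity in $s$ to evaluate it on the diagonal $s=\sigma$ before disintegrating the expectation against the law of $\Sigma$; this route yields \eqref{eq1lemsubs} with no path assumption on $Y$ at all.
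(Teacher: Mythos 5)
Your main argument is essentially the paper's own proof. There the discretization is $\Si_{\tau}=\tau[\Si/\tau+1]$, i.e. your map $\phi_N$ taken along the uniform grid: on $\{\Si\in[(k-1)\tau,k\tau)\}$ one replaces $Y_{\Si}$ by $Y_{k\tau}$, identifies each term through the joint density of $(Y_{k\tau},\Si)$, and passes to the limit using right continuity of $g$ in $s$ together with a dominated-convergence argument — exactly your scheme. You are also correct, and more explicit than the paper (which writes the first equality $\E[F(Y_{\Si})\1(\Si\in[1/K,K])]=\lim_{\tau\to 0}\E[F(Y_{\Si_{\tau}})\1(\Si\in[1/K,K])]$ without comment), that this replacement step uses right continuity of the paths of $Y$; in the intended application this holds because $Y$ is a coordinate of a Feller (stable-like) process.

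The one claim you should retract is the final one: that the conditional-density route yields \eqref{eq1lemsubs} ``with no path assumption on $Y$ at all''. No argument can achieve this, because the hypotheses of the lemma constrain only the laws of $(Y_s,\Si)$ at deterministic times $s$, and these do not determine the law of $Y_{\Si}$. Concretely, let $Z$ be standard normal, $\FC_0$-measurable and independent of a stopping time $\Si$ uniformly distributed on $[1,2]$, and put $Y_s=Z+\1(\Si=s)$. Then $Y$ is adapted, and for each fixed $s$ one has $Y_s=Z$ almost surely, so $(Y_s,\Si)$ has joint density $g_s(y,\si)=\phi(y)\1_{[1,2]}(\si)$ (with $\phi$ the Gaussian density), which is bounded and constant, hence right continuous, in $s$. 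Yet $Y_{\Si}=Z+1$, so for $K\ge 2$ the left-hand side of \eqref{eq1lemsubs} equals $\E F(Z+1)$ while the right-hand side equals $\E F(Z)$, and these differ for generic $F$. So some path regularity of $Y$ is genuinely unavoidable; it enters your proof exactly where it enters the paper's, and the ``alternative'' cannot dispense with it.
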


 \begin{proof}
Let us choose the discrete approximations
$\Si_{\tau}=\tau [\Si/\tau+1]$ (square brackets denoting the integer part), so that $\Si_{\tau} =k\tau$
when $(k-1)\tau \le \Si< k\tau$. Thus $\Si_{\tau}$ are right continuous stopping times such that
$\Si_{\tau}\ge \Si$ for all $\tau$, they depend monotonically on $\tau$ and $\Si_{\tau}\to \Si$, as $\tau \to 0$.
Then we can write
\[
\E [F(Y_{\Si})\1(\Si\in [1/K,K])]=\lim_{\tau \to 0} \E [F(Y_{\Si_{\tau}})\1(\Si \in [1/K,K])]
\]
\[
=\lim_{\tau \to 0} \sum_{k=1}^{\infty} \E [F(Y_{\Si_{\tau}})\1(\Si \in [1/K,K]) \1(\Si_{\tau}=k\tau)]
\]
\[
=\lim_{\tau \to 0} \sum_{k=1}^{\infty} \E \left[F(Y_{k\tau})\1(\Si \in [1/K,K]) \1(\Si \in [(k-1)\tau, k\tau))\right]
\]
\[
= \lim_{\tau \to 0} \int dy \int_{1/K}^K d\si \, g_{\si_{\tau}}(y,\si) F(y).
\]
And therefore
\[
\E [F(Y_{\Si})\1(\Si\in [1/K,K])]
= \int dy \int_{1/K}^K d\si \, g_{\si}(y,\si) F(y)
\]
\[
+\lim_{\tau \to 0} \int dy \int_{1/K}^K d\si \, (g_{\si_{\tau}}-g_{\si})(y,\si) F(y).
\]
Since the last term tends to zero (by the right continuity of $g$ and dominated convergence),
we obtain \eqref{eq1lemsubs}.

\end{proof}

\begin{remark}
(i) To get the same without the restrictions $[1/K,K]$
one needs some integrability conditions for $g_s$ for small and large $s$.
(ii) The existence of the density $g_s(y,\si)$ can be weaken to the condition
\[
\P(Y_s\in dy, \Si\in d\si)=g_s(\si, dy) d\si
\]
such that $g_s(\si, dy)$ weakly right continuous in $s$ uniformly in $\si$ (on bounded intervals).
\end{remark}

\begin{lemma}
\label{lemsubs1}
Let $(Y_s, V_s)$, $Y\in \R^d$, $V \in \R$ be a Markov process with the generator $L+A$, where $L$ acts on the first variable
and does not depend on the second one (so that $Y_s$ is itself a Markov process),
and $A$ acts on the second variable, but may have coefficients depending
on the first variable.   Let the process $(Y_s, V_s)$ have a continuous (for $s>0$)
transition density $G(s;y_0,v_0;y,v)$
and let $G_Y(s,y_0,y)$ be the transition density for the process $Y$ so that
\[
G_Y(u,y_0,y)=\int G(u;y_0,v_0;y,v) dv
\]
for any $v_0$.

Then the random vector $(Y_s,V_u)$ has a density $\phi_{y_0,v_0}(s,u;y,v)$ such that, for $s> u>0$,
\begin{equation}
 \label{eq1lemsubs1}
\phi_{y_0,v_0}(s,u;y,v)=\int G_Y(s-u,p, y) G(u;y_0,v_0;p,v) \, dp.
\end{equation}
Moreover,
\begin{equation}
 \label{eq2lemsubs1}
\frac{\pa}{\pa u} \phi_{y_0,v_0}(s,u;y,v)=\int G_Y(s-u,p, y) A^*_{(v;p)}G(u;y_0,v_0;p,v) \, dp,
\end{equation}
where $A^*$ denotes the adjoint operator to $A$ and the lower index $(v;p)$ indicates that this operator
acts on the variable $v$, while the value of the spacial variable is $p$.

\end{lemma}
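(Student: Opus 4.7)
My plan is to derive \eqref{eq1lemsubs1} from the Markov property and then obtain \eqref{eq2lemsubs1} by differentiating under the integral with the help of the two Kolmogorov equations of the system. The crucial structural ingredient is that, since $L$ does not depend on the second variable, $(L+A)f=Lf$ for any $f$ that depends on $y$ alone; consequently the joint semigroup $e^{t(L+A)}$ preserves functions of $y$ alone and reduces to the semigroup of $Y$ on them. In particular, for $s>u$ the conditional density of $Y_s$ given $(Y_u,V_u)=(p,v)$ depends only on $p$ and equals $G_Y(s-u,p,y)$. Testing against bounded continuous functions $f(y), g(v)$ and using the tower property gives
\[
\E[f(Y_s)g(V_u)]=\int\!\!\int\!\!\int G_Y(s-u,p,y)\, G(u;y_0,v_0;p,v)\, f(y)\, g(v)\, dy\, dv\, dp,
\]
which is exactly \eqref{eq1lemsubs1}.

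For part (ii) I differentiate the integrand of \eqref{eq1lemsubs1} in $u$ under the integral sign. The backward Kolmogorov equation for the marginal process $Y$ yields $\pa_\tau G_Y(\tau,p,y)=L_{(p)}G_Y(\tau,p,y)$, so that $\pa_u G_Y(s-u,p,y)=-L_{(p)}G_Y(s-u,p,y)$. The forward Kolmogorov equation for the joint process, whose generator is $L+A$ and whose dual in the forward variables $(p,v)$ decomposes as $L^*_{(p)}+A^*_{(v;p)}$, gives
\[
\pa_u G(u;y_0,v_0;p,v)=L^*_{(p)}G(u;y_0,v_0;p,v)+A^*_{(v;p)}G(u;y_0,v_0;p,v).
\]
Substituting both formulas into $\pa_u \int G_Y G\, dp$ and integrating by parts in $p$ (which converts $L_{(p)}$ acting on $G_Y$ into $L^*_{(p)}$ acting on $G$), the two $L$-contributions cancel and only $\int G_Y(s-u,p,y)\, A^*_{(v;p)} G(u;y_0,v_0;p,v)\, dp$ survives, which is \eqref{eq2lemsubs1}.

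The main technical obstacle is justifying both the differentiation under the integral sign and the integration by parts in $p$. These require sufficient smoothness of $G$ and $G_Y$ in the spatial argument together with adequate decay at infinity so that no boundary terms appear; they are covered by the regularity hypotheses already in force in the paper (smooth transition densities for the stable-like generators and the presence of a smooth core for $L$). A clean way to sidestep any pointwise regularity issues is to test both sides of the desired identity against a pair of smooth compactly supported functions $f(y)$ and $h(v)$, carry out the integration by parts inside that dual pairing, and then peel off $f$ and $h$ at the end to recover \eqref{eq2lemsubs1} in the stated pointwise form.
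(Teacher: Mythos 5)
Your proposal is correct and follows essentially the same route as the paper: part (i) by conditioning on $(Y_u,V_u)$ and using that the conditional law of $Y_s$ depends only on $Y_u=p$ with density $G_Y(s-u,p,y)$, and part (ii) by differentiating under the integral, applying the backward Kolmogorov equation to $G_Y$ and the forward equation to $G$, with the $L$-terms cancelling by duality in $p$. Your closing remark about interpreting \eqref{eq2lemsubs1} in a weak (tested) sense when pointwise regularity is lacking matches the paper's own remark that the formula may hold only in the sense of generalized functions unless additional mild regularity is assumed.
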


\begin{remark}
Generally speaking, formula \eqref{eq2lemsubs1} may hold only
in the sense of generalized functions. To ensure that it holds as an equation for functions,
some additional mild regularity for transitions must be assumed.
\end{remark}

\begin{proof}
For a bounded continuous function $F$, we have
\[
\E F(Y_s,V_u)=\int \E (F(Y_s,V_u)|Y_u=p,V_u=v)  G(u;y_0,v_0; p,v) \, dp dv
\]
\[
=\int \E (F(Y_s,v)|Y_u=p)  G(u;y_0,v_0; p,v) \, dp dv
=\int F(y,v) G_Y(s-u,p,y) G(u;y_0,v_0; p,v) \, dy dp dv,
\]
implying \eqref{eq1lemsubs1}.

Next,
\[
\frac{\pa}{\pa u} \phi_{y_0,v_0}(s,u;y,v)=-\int (L_{(p)}G_Y(s-u,p,y)) G(u;y_0,v_0; p,v) \, dp
\]
\[
+\int G_Y(s-u,p,y)) (L^*_{(p)}+A^*_{(v;p)})G(u;y_0,v_0;p,v) \, dp.
\]
The key observation is that the terms with $L_{(p)}$ cancel implying  \eqref{eq2lemsubs1}.
\end{proof}

\begin{lemma}
\label{lemsubs2}
Under the assumption of Lemma \ref{lemsubs1}, let
\begin{equation}
 \label{eq1lemsubs2}
A \phi(v) =\int_0^{\infty} ( \phi (v+w)-\phi(v))\nu(v,w;p) \, dw,
\end{equation}
where $p$ is the value of the spatial variable,
where $\nu$ is a strictly positive function for $w>0$ such that
\[
\sup_{v,p} \int \min(1,w) \nu(v,w;p) \, dw <\infty.
\]

Then the coordinate $V_t$ is a strictly increasing process a.s., so that its generalized inverse
(or hitting times) process
\[
Z_t=\sup\{ u\ge 0: V(u)\le t\}=\inf\{ u\ge 0: V(u)> t\}, \quad t>v_0,
\]
is continuous; and for the density $g_{y_0,v_0}(s,t;y,z)$ of the pair $(Y_s,Z_t)$, $s> 0$, $t> v_0$,
the following formula holds:
\[
g_{y_0,v_0}(s,t;y,z)=  \frac{\pa}{\pa z} \int_t^{\infty} \phi_{y_0,v_0}(s,z;y,v_0) \, dv
\]
\begin{equation}
 \label{eq2lemsubs2}
 =\int_t^{\infty}dv \int_{\R^d} dp \,  G_Y(s-z,p,y) A^*_{(v;p)}G(z;y_0,v_0; p,v).
\end{equation}
In particular,
\begin{equation}
 \label{eq3lemsubs3}
\lim_{z\to s}g_{y_0,v_0}(s,t;y,z)
= \int_t^{\infty} A^*_{(v;y)}G(s;y_0,v_0;y,v) \, dv.
\end{equation}

\end{lemma}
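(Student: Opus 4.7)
The plan is to compute the density $g_{y_0,v_0}(s,t;y,z)$ of $(Y_s,Z_t)$ by dualising the event $\{Z_t\le z\}$ against $\{V_z>t\}$ and then invoking Lemma \ref{lemsubs1}.

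First I would verify that $V_t$ is strictly increasing a.s. Because $A$ is a pure-jump operator whose Lévy kernel $\nu(v,w;p)\,dw$ is supported on $w>0$, the trajectories of $V$ are non-decreasing with only positive jumps. The strict positivity of $\nu$ on $(0,\infty)$ then excludes intervals of constancy, so $V$ is strictly increasing and its generalised inverse $Z_t=\inf\{u\ge 0: V(u)>t\}$ is continuous. Modulo null sets, this gives the basic duality
\[
\{Z_t\le z\}=\{V_z>t\}.
\]

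Combining this duality with the joint density $\phi_{y_0,v_0}(s,z;y,v)$ of $(Y_s,V_z)$ furnished by Lemma \ref{lemsubs1}, I would write
\[
\P(Y_s\in dy,\, Z_t\le z)=\P(Y_s\in dy,\, V_z>t)=dy\int_t^{\infty}\phi_{y_0,v_0}(s,z;y,v)\,dv,
\]
and differentiate in $z$ to obtain the first equality of \eqref{eq2lemsubs2}. Substituting the evolution formula \eqref{eq2lemsubs1} for $\pa_z\phi$ and interchanging the $z$-derivative with the outer integration in $v$ then produces the second equality,
\[
g_{y_0,v_0}(s,t;y,z)=\int_t^{\infty}dv\int dp\,G_Y(s-z,p,y)\,A^*_{(v;p)}G(z;y_0,v_0;p,v).
\]
Fubini and the exchange of $\pa_z$ with $\int_t^{\infty}dv$ are justified by the uniform boundedness and continuity of $G$ and $\phi$ carried over from Lemma \ref{lemsubs1} together with the integrability of the Lévy kernel defining $A^*$.

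For the limit \eqref{eq3lemsubs3} I would let $z\to s$: the family $\{G_Y(s-z,\cdot,y)\}$ is an approximate identity concentrating at the point $y$, so the inner $p$-integral against $G_Y(s-z,p,y)$ collapses to an evaluation at $p=y$, giving
\[
\lim_{z\to s}g_{y_0,v_0}(s,t;y,z)=\int_t^{\infty}A^*_{(v;y)}G(s;y_0,v_0;y,v)\,dv.
\]
I expect the principal obstacle to be the rigorous passage to this last limit: one must push the delta-type limit through the unbounded $v$-integration on $[t,\infty)$ and past the nonlocal operator $A^*_{(v;p)}$, which requires enough continuity of $G$ in $p$ near $y$ together with decay of $A^*G$ in $v$ at infinity, in the spirit of the regularity remark following Lemma \ref{lemsubs1}. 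The same considerations underpin the legitimacy of interchanging $\pa/\pa z$ with $\int_t^{\infty}dv$ in the preceding step.
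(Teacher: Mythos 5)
Your proposal is correct and follows essentially the same route as the paper: the duality $\{Z_t\le z\}=\{V_z>t\}$ (valid a.s.\ by strict increase of $V$), differentiation of $\P(Y_s\in dy,\,V_z>t)=dy\int_t^{\infty}\phi_{y_0,v_0}(s,z;y,v)\,dv$ in $z$, and substitution of formula \eqref{eq2lemsubs1} from Lemma \ref{lemsubs1}. The only addition is that you spell out the limit \eqref{eq3lemsubs3} via the approximate-identity property of $G_Y(s-z,\cdot,y)$, a step the paper leaves implicit, and this is exactly the right justification.
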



\begin{proof}
The fact that $V_t$ is strictly increasing follows from \eqref{eq1lemsubs2},
which in turn implies the continuity of $Z_t$.

Consequently, since $(Z_t\le z)=(V_z\ge t)=(V_z>t)$ a.s., it follows that
\[
g_{y_0,v_0}(s,t;y,z)=\frac{\pa^2}{\pa y \pa z} \P(Y_s\le y, V_z \ge t)
\]
\[
=\frac{\pa^2}{\pa z \pa y} \int_{-\infty}^y \int_t^{\infty} \phi_{y_0,v_0}(s,z;\tilde y,v) \, dv dy
=\frac{\pa}{\pa z} \int_t^{\infty} \phi_{y_0,v_0}(s,z;y,v) \, dv.
\]
Substituting \eqref{eq2lemsubs1}, we get \eqref{eq2lemsubs2}.
\end{proof}

\begin{theorem}
\label{thsubs}
Under the assumptions of Lemma \ref{lemsubs1}
\begin{equation}
 \label{eq1thsubs}
\E [F(Y_{Z_t})\1(\Z_t\in [1/K,K])]
= \int dy \int_{1/K}^K ds \int_t^{\infty} dv \, A^*_{(v;y)}G(s;y_0,v_0; y,v) F(y),
\end{equation}
and also
\[
\E [F(Y_{Z_t})\1(\Z_t\in [1/K,K])]
= \int dy \int_{1/K}^K ds \int_0^t dv \, (A_{(v;y)}\theta_{\ge t})(v)G(s;y_0,v_0; y,v) F(y)
\]
\begin{equation}
 \label{eq2thsubs}
 =\int dy \int_{1/K}^K ds \int_{v_0}^t dv \int_{t-v}^{\infty} \nu(v,w;y) \, dw \, G(s;y_0,v_0; y,v) F(y),
\end{equation}
where $\theta_{\ge t}$ is the indicator function of the interval $[t,\infty)$.
\end{theorem}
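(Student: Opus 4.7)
The plan is to apply Lemma \ref{lemsubs} with the stopping time $\Si := Z_t$ (for fixed $t>v_0$) and then insert the formula for the joint density of $(Y_s, Z_t)$ supplied by Lemma \ref{lemsubs2}. Lemma \ref{lemsubs2} already guarantees that $V_s$ is strictly increasing, so $Z_t=\inf\{s\ge 0: V_s>t\}$ is a genuine stopping time, and that the joint density $g_{y_0,v_0}(s,t;y,z)$ exists and is given by \eqref{eq2lemsubs2}. The right-continuity of $g_{y_0,v_0}(s,t;y,z)$ in $s$ on intervals separated from zero—required by Lemma \ref{lemsubs}—follows from the continuity in the time variable of $G$ and $G_Y$ assumed in Lemma \ref{lemsubs1}.

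Plugging this into Lemma \ref{lemsubs} directly yields
\[
\E[F(Y_{Z_t})\1(Z_t\in[1/K,K])]=\int dy\int_{1/K}^K dz\,g_{y_0,v_0}(z,t;y,z)\,F(y),
\]
i.e.\ the density is evaluated on the diagonal $s=z$. But the diagonal value is exactly what \eqref{eq3lemsubs3} identifies (taking $s-z\to 0$ turns $G_Y(s-z,p,y)$ into $\de(p-y)$ in \eqref{eq2lemsubs2}), so
\[
g_{y_0,v_0}(z,t;y,z)=\int_t^{\infty}A^*_{(v;y)}G(z;y_0,v_0;y,v)\,dv,
\]
and an application of Fubini yields the first identity \eqref{eq1thsubs}.

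For the second identity the plan is to move $A^*$ off of $G$ and onto the indicator $\theta_{\ge t}$ by the defining adjoint pairing of $A$ (acting on the variable $v$ with $y$ frozen), so that
\[
\int_t^{\infty}A^*_{(v;y)}G(z;y_0,v_0;y,v)\,dv=\int(A_{(v;y)}\theta_{\ge t})(v)\,G(z;y_0,v_0;y,v)\,dv.
\]
A direct evaluation via \eqref{eq1lemsubs2} then shows $A_{(v;y)}\theta_{\ge t}(v)=0$ for $v\ge t$ (both $\theta_{\ge t}(v)$ and $\theta_{\ge t}(v+w)$ equal one) and $A_{(v;y)}\theta_{\ge t}(v)=\int_{t-v}^{\infty}\nu(v,w;y)\,dw$ for $v<t$. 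Since $V_s$ starts at $v_0$ and is monotone, $G(z;y_0,v_0;y,v)=0$ for $v<v_0$, reducing the outer $v$-integral to $[v_0,t]$ and giving \eqref{eq2thsubs}.

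The main obstacle is justifying the duality step, because $\theta_{\ge t}$ is only bounded and $A$ is nonlocal. However, the assumption $\sup_{v,p}\int\min(1,w)\nu(v,w;p)\,dw<\infty$ makes the iterated integrals defining $\int\psi(A\theta_{\ge t})$ and $\int(A^*\psi)\theta_{\ge t}$ absolutely convergent when $\psi=G(z;y_0,v_0;y,\cdot)$ is integrable and smooth in $v$, so the identity follows from Fubini without any delicate distributional argument; this is where the mild regularity on $G$ highlighted in the remark after Lemma \ref{lemsubs1} is genuinely used.
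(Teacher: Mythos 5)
Your proposal is correct and follows essentially the same route as the paper: identity \eqref{eq1thsubs} is obtained by combining Lemma \ref{lemsubs} (applied to $\Si=Z_t$) with the diagonal limit \eqref{eq3lemsubs3} of Lemma \ref{lemsubs2}, and \eqref{eq2thsubs} then follows by moving $A^*$ onto $\theta_{\ge t}$ via the adjoint pairing and computing $A_{(v;y)}\theta_{\ge t}$ explicitly from \eqref{eq1lemsubs2}. Your added care about the right-continuity hypothesis of Lemma \ref{lemsubs} and the Fubini justification of the duality step only makes explicit what the paper leaves implicit.
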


\begin{remark}
(i) Formula \eqref{eq1thsubs} in this general form was seemingly first derived in \cite{Ko09}
(though the proof was a bit sketchy there) and was reproduced in monograph \cite{Ko11}.
The advantage of its modification \eqref{eq2thsubs} is that for its validity only the continuity
of the density $G(s;y_0,v_0; y,v)$ is required (and even this condition can be weaken), and not any smoothness
that may be needed to make sense out of $A^*_{(v;y)}G(s;y_0,v_0; y,v)$.

(ii)  It follows that the density of the random variable $Y_{Z_t}$ equals
\begin{equation}
 \label{eq3thsubs}
 \int_0^{\infty} ds \int_{v_0}^t dv \, (A_{(v;y)}\theta_{\ge t})(v) \, G(s;y_0,v_0;y,v),
\end{equation}
whenever this integral is well defined.
\end{remark}

\begin{proof}
Formula \eqref{eq1thsubs} follows from \eqref{eq3lemsubs3} and  \eqref{eq1lemsubs}.
Next, by duality (effectively by the shift of the variable of integration),
\[
\int_t^{\infty} A^*_{(v;y)}G(s;y_0,v_0; y,v) \, dv
=\int_{-\infty}^{\infty} (\theta_{\ge t})(v) A^*_{(v;y)}G(s;y_0,v_0; y,v) \, dv
\]
\[
=\int_{-\infty} ^{\infty}  (A_{(v;y)}\theta_{\ge t})(v) G(s;y_0,v_0; y,v) \, dv
=\int_{v_0}^t dv \int_{t-v}^{\infty} \nu(v,w;y) \, dw \, G(s;y_0,v_0; y,v),
\]
implying \eqref{eq2thsubs}.
\end{proof}

\section{On the rate of convergence for the standard CTRW}
\label{secrates}

It is well known (see e.g. \cite{Meer09} and \cite{Ko11}) that the Markov chains with
jumps distributed like \eqref{eqpowertail}
converge after appropriate scaling to stable subordinators. In the following result we give
exact rates of weak convergence for the corresponding generators under  \eqref{eqpowertail}.

 \begin{prop}
 \label{apprrateCTRW}
 Let $p(y)$ be a probability density on $\R_+$ such that
 $p(y)=y^{-1-\al}$ for $y\ge B$ with some $\al \in (0,1)$ and $B>0$. Then,
 for any continuous $f$ on $\R_+$ such that $f(0)=f(\infty)=0$ and $f$ is Lipschitz
at zero so that $|f(y)| \le L y$ for $y\in [0,Bh]$ and some constant $L$, it follows that
 \begin{equation}
 \label{eq1apprrateCTRW}
 \left|h^{-\al} \int_0^{\infty} f(hy) p(y) dy-\int_0^{\infty} \frac{f(y) dy}{y^{1+\al}}\right| \le C_B L h^{1-\al} ,
\end{equation}
with
\[
C_B =\frac{B^{1-\al}}{1-\al} +\int_0^B y p(y)dy.
\]
 \end{prop}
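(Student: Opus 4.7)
The plan is to split the integral on the left-hand side at the cutoff $y=B$, handling the tail by a direct change of variables and estimating the small-$y$ contribution using the Lipschitz bound at zero.

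First I would write
\[
h^{-\al}\int_0^\infty f(hy)p(y)\,dy = h^{-\al}\int_0^B f(hy)p(y)\,dy + h^{-\al}\int_B^\infty f(hy)\, y^{-1-\al}\,dy,
\]
using the explicit form $p(y)=y^{-1-\al}$ for $y\ge B$ in the second piece. For the tail integral, the substitution $z=hy$ produces
\[
h^{-\al}\int_B^\infty f(hy)\,y^{-1-\al}\,dy =\int_{Bh}^\infty \frac{f(z)}{z^{1+\al}}\,dz,
\]
so the tail part already reproduces the target integral $\int_0^\infty f(z)z^{-1-\al}\,dz$ up to a defect on $[0,Bh]$ of size
\[
\Bigl|\int_0^{Bh}\frac{f(z)}{z^{1+\al}}\,dz\Bigr|\le L\int_0^{Bh} z^{-\al}\,dz=\frac{B^{1-\al}}{1-\al}\,L\,h^{1-\al},
\]
where the hypothesis $|f(z)|\le Lz$ on $[0,Bh]$ is exactly what is needed to absorb the singularity $z^{-1-\al}$ near the origin.

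For the small-$y$ piece, the same Lipschitz bound gives $|f(hy)|\le Lhy$ for $y\in[0,B]$, whence
\[
\Bigl|h^{-\al}\int_0^B f(hy)p(y)\,dy\Bigr|\le L\,h^{1-\al}\int_0^B y\,p(y)\,dy.
\]
Adding the two estimates yields exactly the constant
\[
C_B=\frac{B^{1-\al}}{1-\al}+\int_0^B y\,p(y)\,dy,
\]
which proves \eqref{eq1apprrateCTRW}.

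There is no real obstacle here; the only thing to watch is that the Lipschitz hypothesis at zero is applied on the correct interval $[0,Bh]$ after the change of variables (so that the $y\in[0,B]$ range in the original integral and the $z\in[0,Bh]$ range in the rescaled one are both covered), and that the condition $\al<1$ is what makes the integral $\int_0^{Bh} z^{-\al}\,dz$ finite and produces the factor $(1-\al)^{-1}$ in $C_B$. The condition $f(\infty)=0$ and the probability normalization of $p$ play no role in the bound itself; they simply make both sides of \eqref{eq1apprrateCTRW} well defined.
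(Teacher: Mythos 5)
Your proof is correct and follows essentially the same decomposition as the paper: split at the cutoff, match the tail contribution exactly, and bound the $[0,Bh]$-contributions of both integrals via the Lipschitz hypothesis, producing precisely the two terms of $C_B$. The only difference is cosmetic: where you obtain the exact tail identity by the direct substitution $z=hy$, the paper first verifies it for indicator functions $\1_{\ge C}$ with $C\ge Bh$ and then extends by linearity and density; your change of variables is the more direct route to the same identity.
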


\begin{remark}
If, instead of assuming Lipschitz at zero, one assumes H\"older continuity at zero,
that is $|f(y)| \le L y^{\be}$ for $y\in [0,Bh]$ and some $L>0$, $\be >\al$, then inequality
\eqref{eq1apprrateCTRW} holds with the r.h.s.
\[
\left(\frac{B^{\be-\al}}{\be-\al} +\int_0^B y^{\be} p(y) dy\right) L h^{\be-\al}.
\]
\end{remark}

\begin{proof}
Let $f=\1_{\ge C}$ with $C\ge Bh$. Then
\[
h^{-\al} \int_0^{\infty} f(hy) p(y) dy=h^{-\al} \int_{C/h}^{\infty} p(y) dy
\]
\[
=h^{-\al} \al^{-1} (C/h)^{-\al}=\al^{-1} C^{-\al}
=\int_C^{\infty} \frac{dy}{y^{1+\al}}=\int_0^{\infty} \frac{f(y) \, dy}{y^{1+\al}}.
 \]
Hence by linearity and the  density argument it follows that
\[
h^{-\al} \int_0^{\infty} f(hy) p(y) dy=\int_0^{\infty} \frac{f(y) dy}{y^{1+\al}},
\]
for any bounded measurable $f$ having support on $[Bh,\infty)$.
Next let $f$ have support on $[0,Bh]$. Then
\[
\left|h^{-\al} \int_0^{\infty} f(hy) p(dy)\right|=\left|h^{-\al} \int_0^B f(hy) p(y) dy\right|
\le h^{1-\al} L \int_0^B y p(y) dy,
\]
and
\[
\left|\int_0^{\infty} \frac{f(y) dy}{y^{1+\al}}\right|=\left|\int_0^{Bh} \frac{f(y) dy}{y^{1+\al}}\right|
\le L \int_0^{Bh} \frac{dy}{y^{\al}}=\frac{B^{1-\al}}{1-\al}Lh^{1-\al},
\]
implying \eqref{eq1apprrateCTRW}.
\end{proof}

In particular, setting $\tau=h^{\al}$, it follows that
 \begin{equation}
 \label{eq10apprrateCTRW}
 \left|\tau ^{-1} \int_0^{\infty} (f(x \pm \tau^{1/\al} y)-f(x)) p(y) dy
 -\int_0^{\infty} \frac{(f(x\pm y)-f(x)) dy}{y^{1+\al}}\right| \le C_B L \tau^{(1/\al)-1},
\end{equation}
where $L$ is the sup of the derivative of $f$ near $x$.

\section{Proof of Theorem \ref{th1}}
\label{secpr1}

(i)  We have
\[
\frac{1}{\tau}(U^{\tau}F-F)(x,s)
\]
\[
= \frac{1}{\tau}\int_{\R_+}\int_{\R^d} [F(x+\tau^{1/\be} y,  s+\tau^{1/(\al a(s,x))}r)-F(x,s)] Q_{s,x}(r) \, dr \, p(x,dy).
\]
 \begin{equation}
\label{scalingMarlift}
=\frac{1}{\tau}\int_{\R_+} [F(x,  s+\tau^{1/(\al a(s,x))}r)-F(x,s)] Q_{s,x}(r) \, dr
+\frac{1}{\tau}\int_{\R^d} [F(x+\tau^{1/\be} y,  s)-F(x,s)]  p(x,dy)+R,
\end{equation}
where
\[
R=\frac{1}{\tau}\int_{\R_+} \int_{\R^d} [g_y(x,  s+\tau^{1/(\al a(s,x))}r)-g_y(x,s)] Q_{s,x}(r) \, dr \, p(x,dy),
\]
with
\[
g_{y}(x,s)=F(x+\tau^{1/\be} y,s)-F(x,s).
\]

By \eqref{eq10apprrateCTRW}, the first term in   \eqref{scalingMarlift} converges, as $\tau\to 0$, to
\[
\int_0^{\infty} \frac {F(x, s+r)-F(x,s)}{r^{1+\al a(s,x)}} dr,
\]
whenever $F$ is continuously differentiable in $s$.
By \eqref{eqstaoperap1}, the second term in
\eqref{scalingMarlift} converges, as $\tau\to 0$, to $L_{st}^{\be}F(x,s)$.
Again by  \eqref{eq10apprrateCTRW} and \eqref{eqstaoperap1} it follows that
$R \to 0$, as $\tau \to 0$, implying \eqref{eq1th1}.

(ii) The operator $L$ is the sum of two terms, each one of them generates a Feller semigroup
on $C_{\infty}(\R^{d+1})$ such that the spaces $C^2_{\infty}(\R^{d+1})$ and $C^4_{\infty}(\R^{d+1})$
represent its invariant core. The fact that $L$ generates a Feller semigroup on $C_{\infty}(\R^{d+1})$
with an invariant core $C^2_{\infty}(\R^{d+1})$ follows from Theorems 5.2.1 and  5.2.2 of \cite{Kobook19}.
On the other hand, there is a well known technique of building transition probabilities for stable-like
processes with the generators of type \eqref{eq1th1}. Though the author is unaware of any result for
generator \eqref{eq1th1} exact, technique of \cite{Ko00}  (see also Chapter 7 of \cite{Ko11}) is fully
applicable yielding the existence of 4 times differentiable transition probabilities  $G(t;x,s; y,v)$, $t>0$,
for the process $(X_{x,s}, S_{x,s})(t)$.  This provides also
an alternative proof of the statement about the cores.

(iii) It is a direct consequence of (i),(ii) and the standard general result on the convergence
of Markov chains, see e.g. Theorem 19.28 of \cite{Kal}.

\section{Subordination for discrete Markov chains}
\label{secsubdis}

As a starting point for the proof of Theorem \ref{th2},
we give here some preliminary calculations for the subordinated Markov chains in discrete times.

Let $Y_{k\tau}$ be an adapted process on a stochastic basis $(\Om, \FC, \FC_t, P)$ and $\Si$
a stopping time with values in $\{k\tau\}$, $k\in \N$.
Let the random variables  $Y_{m\tau} \1(\Si=k\tau))$ have densities $g_{m\tau}(y, k\tau)$.
Then, for any $K>0$ and a continuous bounded function $F(y)$,
\begin{equation}
 \label{eq1lemsubsdis}
\E [F(Y_{\Si})\1(\Si\in [1/K,K])]= \sum_{k \tau \in [1/K,K]} \int \, g_{k\tau}(y, k\tau) F(y) dy.
\end{equation}

Let $(Y_{m\tau}, V_{m\tau})$, $Y\in \R^d$, $V \in \R$, be a Markov chain with the
transition operator $U^{\tau}$:
\[
U^{\tau}F(y_0,v_0)=\int F(y,v) P_{\tau} (y_0,v_0;y,v) \, dy dv=\int F(y,v) P_{\tau} (y_0,v_0;dy,dv),
\]
such that $Y_{m\tau}$ is a Markov chain on its own with the transition $U^{\tau}_Y$.

Then the density of the random vector $(Y_{k\tau}, V_{k\tau}, V_{(k-1)\tau})$ is
\begin{equation}
 \label{eqjointdis}
\int P_{\tau}(p,v; y,w)P_{(k-1)\tau}(y_0,v_0; p,v) \, dp,
\end{equation}
because
\[
\E F (Y_{k\tau}, V_{k\tau}, V_{(k-1)\tau})
\]
\[
=\int \E (F(Y_{k\tau}, V_{k\tau}, v)|Y_{(k-1)\tau}=p, V_{(k-1)\tau}=v)P_{(k-1)\tau}(y_0,v_0; p,v) \, dp dv
\]
\[
=\int F(y, w, v)P_{\tau}(p,v; y,w)P_{(k-1)\tau}(y_0,v_0; p,v) \, dp dv dw dy.
\]

Let the coordinate $V_{k\tau}$ be strictly increasing, and let
\[
Z_t^{\tau}=\sup\{ m\tau : V(m\tau)\le t\}=\inf\{ m\tau: V(m\tau)> t\}, \quad t>v_0,
\]
be its (generalized) inverse (or hitting times) process.

Then
\[
\E [F(Y_{Z_t^{\tau}})\1(Z_t^{\tau}\in [1/K,K])]
=\sum_{k \tau \in  [1/K,K]} F(Y_{k\tau}) \1(Z^{\tau}_t=k\tau)
=\sum_{k \tau \in  [1/K,K]} F(Y_{k\tau}) \1(V_{(k-1)\tau}<t\le V_{k\tau})
\]
\begin{equation}
 \label{eq4lemsubsdis}
 = \int F(y) \sum_{k \tau \in  [1/K,K]} \int \1(v<t\le w)
  P_{\tau}(p,v; y,w)P_{(k-1)\tau}(y_0,v_0; p,v) \, dy dp dv dw.
\end{equation}

This can be rewritten as
\begin{equation}
 \label{eq5lemsubsdis}
\E [F(Y_{Z_t^{\tau}})\1(Z_t^{\tau}\in [1/K,K])]
 = \int \sum_{k \tau \in  [1/K,K]} (U^{\tau}F_v)(p,v) P_{(k-1)\tau}(y_0,v_0; p,v) \, dp dv,
\end{equation}
with $F_v(y,w)= F(y)  \1(v<t\le w)$.

\section{Proof of Theorem \ref{th2}}
\label{secpr2}

Theorem \ref{thsubs} implies formula
\eqref{eq20th2}:
\[
\E [F(\tilde X_{x,s}(t))\1(T_{x,s}(t)\in [1/K,K])]
\]
\begin{equation}
\label{eq20th2rep}
=\int dy \int_{1/K}^K du \int_s^t dv \frac{(t-v)^{-a(v,y)\al}}{a(v,y)\al} \, G(u;x,s; y,v) F(y).
\end{equation}

To complete the proof of statement (ii) of Theorem \ref{th2} one has to note that, due to the
asymptotic behavior of the transition probabilities  $G(u;x,s; y,v)$ of the stable-like process
$ (X_{x,s}, S_{x,s})(t)$ (see \cite{Ko00} or \cite{Ko11}), the function $G(u;x,s; y,v)$ is
integrable for small and large $u$, so that one can pass to the
limit $K\to \infty$ in \eqref{eq20th2rep} to obtain  \eqref{eq20th2rep}.

Next, by \eqref{eq5lemsubsdis},
\begin{equation}
 \label{eq5lemsubsdisrep}
\E [F(\tilde X^{\tau}_{x,s}(t))\1(T^{\tau}_{x,s}(t)\in [1/K,K])]
 = \int dy \int_0^t dv \, \sum_{k \tau \in  [1/K,K]} (U^{\tau}F_v)(y,v) P_{(k-1)\tau}(x,s; y,v),
\end{equation}
where $F_v(q,w)= F(q)  \1(v<t\le w)$ and
\begin{equation}
\label{eqtransitionenhrep}
U^{\tau}F(y,v)= \int_{\R_+}\int_{\R^d} F(y+\tau^{1/\be} q,  v+\tau^{1/(\al  a(v,y))} r) Q_{v,y}(r) \, dr \, p(y,dq),
\end{equation}
with $Q_{s,x}$ satisfying \eqref{eqpowertail1}.

Therefore,
\[
\E [F(\tilde X^{\tau}_{x,s}(t))\1(T^{\tau}_{x,s}(t)\in [1/K,K])]
\]
\begin{equation}
\label{eq1th2proof}
 = \int dy \int_0^t dv \,  \sum_{k\tau \in  [1/K,K]}
 P_{(k-1)\tau}(x,s; y,v) F(y+ \tau^{1/\be} q) p(y,dq)  \1(v<t\le v+\tau^{1/(\al  a(v,y))} r) Q_{v,y}(r) \, dr.
\end{equation}

Since the probability of small and large jumps are small both for $S_{x,s}$ and $S^{\tau}_{x,s}$,
 it follows that, in
order to prove Statement (i) of Theorem \ref{th2}, it is sufficient to show that
expression \eqref{eq1th2proof} tends to expression \eqref{eq20th2rep}, as $\tau \to 0$, for any finite $K$
and any $F\in C_{\infty} (\R^d)$. By the density argument this can be further reduced to
the case of $F\in C^2_{\infty} (\R^d)$.

The key point for the argument is that, by Theorem \ref{th1}, the sums
\[
 \tau \int dy \int_0^t dv \,  \sum_{k\tau \in  [1/K,K]}
 P_{(k-1)\tau}(x,s; y,v) \Om(y,v)
 \]
approximate uniformly the Riemannian sums for the integral
\[
\int dy \int_0^t dv \,  \int_{[1/K,K]} du \,
 G(u; x,s; y,v) \Om(y,v),
 \]
 and thus converge to this integral, at least for $\Om \in C_{\infty}(\R^{d+1})$.

Turning concretely to \eqref{eq1th2proof}, we note firstly that the limit must be the same for the expression

\begin{equation}
 \label{eq2th2proof}
\int dy \int_0^t dv \,  \sum_{k \tau\in  [1/K,K]}
 P_{(k-1)\tau}(x,s; y,v) F(y)  \1(v<t\le v+\tau^{1/(\al  a(v,y))} r) Q_{v,y}(r) \, dr.
\end{equation}

In fact, expressions \eqref{eq1th2proof} and \eqref{eq2th2proof} differ by
$\int (F(y+\tau^{1/\be} q)-F(y)) p(y,dq)$, which tends to zero, as $\tau \to 0$, uniformly in $y$
(for $F\in  C^2_{\infty} (\R^d)$).
Hence, if functions \eqref{eq2th2proof} have a finite limit (as we are going to show),
then the same limit have functions \eqref{eq1th2proof}.

Since
 \begin{equation}
 \label{eq3th2proof}
\frac{1}{\tau}\int_0^{\infty}  \1(v<t\le v+\tau^{1/(\al  a(v,y))} r) Q_{v,y}(r) \, dr
=\frac{1}{\tau} \int_0^{\infty}  [\theta_{\ge t}(v+\tau^{1/a(v,y)\al}r)-\theta_{\ge t}(v)] Q_{v,y}(r) \, dr,
\end{equation}
it tends to $A \theta_{\ge t}(v)$, by \eqref{eq10apprrateCTRW}, as claimed. Therefore, we just need
to show that this limit goes through the Riemannian sum approximation \eqref{eq2th2proof}. For this end we note
that the limit of \eqref{eq2th2proof} remains the same if we slightly decrease the domain of integration, namely
if we choose the expression
\begin{equation}
 \label{eq4th2proof}
\int dy \int_{[0, t-B\tau^{1/a(v,y) \al}]} dv \,  \sum_{k \tau\in  [1/K,K]}
 P_{(k-1)\tau}(x,s; y,v) F(y)  \1(v<t\le v+\tau^{1/(\al  a(v,y))} r) Q_{v,y}(r) \, dr.
\end{equation}

In fact, by \eqref{eq3th2proof} and \eqref{eqpowertail1}, the difference between
\eqref{eq2th2proof} and  \eqref{eq4th2proof}  is bounded in magnitude by the expression
\[
\tau \int dy \int_{[t-B\tau^{1/a(v,y) \al},t]} dv \,  \sum_{k \tau\in  [1/K,K]}
 P_{(k-1)\tau}(x,s; y,v) F(y) \Om_{\tau}(y,v),
\]
with $\Om(y,v)\le 1/\tau$. And this expression tends to zero, as $\tau \to 0$, because
\[
\int_{[t-B\tau^{1/a(v,p) \al},t]}  \Om_{\tau}(p,v) \, dv
\]
is bounded by $B\tau^{(1/a(v,y) \al)-1}$, which is uniformly small.

Next, for $v>B\tau^{1/a(v,y) \al}$,
\[
\frac{1}{\tau}\int_0^{\infty}  \1(v<t\le v+\tau^{1/(\al  a(v,y))} r) Q_{v,y}(r) \, dr
=A \theta_{\ge t}(v).
\]
(see proof of Proposition \ref{apprrateCTRW} for this simple calculation).

Hence it remains to find the limit of the expression
\begin{equation}
 \label{eq5th2proof}
\tau \int dy \int_{[0, t-A\tau^{1/a(v,y) \al}]} dv \,  \sum_{k \tau\in  [1/K,K]}
 P_{(k-1)\tau}(x,s; y,v) F(y) A \theta_{\ge t}(v).
\end{equation}

Since $G(u; x,s; y,v)$ is continuous in $y,v$ and  $A \theta_{\ge t}(v)$ has an integrable singularity,
it follows that this expression tends to \eqref{eq20th2rep} completing the proof of part (i) of Theorem \ref{th2}.

\section{Proof of Theorem \ref{th3}}
\label{secpr3}

The statement on the probabilistic representation of the solutions to \eqref{eq2th2} follows from the
standard argument (see e.g. \cite{Ko19}) with Dynkin's martingale.
Namely, for a function $F$ from the domain of the generator $\La$ of the process
$(X_{x,s},S_{x,s})(t)$, the process
\[
M^t_F=F(X_{x,s}(t),S_{x,s}(t))-F(x,s)-\int_0^t \La F (X_{x,s},S_{x,s})(r) \, dr
\]
is a martingale, called  Dynkin's martingale. We are now looking at the modification of
the process $(X_{x,s},S_{x,s})(t)$ obtained by stopping it at the attempt to cross the boundary $S_{x,s}=t$.
This modification specifies a process on $\R^d \times (-\infty,t]$ with the generator
\[
\tilde \La (x,s)=L_{st}^{\be}F(x,s)
\]
\begin{equation}
\label{eq6th2proof}
+\int_0^{t-s} \frac {F(x, s+r)-F(x,s)}{r^{1+\al a(s,x)}} dr
+(F(x,t)-F(x,s))\frac{(t-s)^{-a(s,x) \al}}{a(s,x) \al},
\end{equation}
obtained formally from \eqref{eq1th1} by reducing it to
the set of functions such that $F(x,r)=F(x,t)$ for all $r\ge t$.

Applying Doob's optional sampling theorem to the martingale $M^t_F$ and the stopping time
\[
T_{x,s}(t)=\inf \{r: S_{x,s} (r) \ge t\},
\]
we can conclude that, if $\tilde \La F=0$, which is equivalent to equation \eqref{eq2th2}, then
\begin{equation}
 \label{eq7th2proof}
F(x,s)=\E F[\tilde X_{x,s}(t)] =\E F[ X_{x,s} (T_{x,s}(t))].
\end{equation}

Thus formula \eqref{eq7th2proof} gives the probabilistic representation
for a solution of the fractional boundary problem \eqref{eq2th2} implying also the
uniqueness of such a solution. To complete the proof of well-posedness it remains to
check that formula \eqref{eq7th2proof} really solves the problem.
The fact that it satisfies the boundary condition  $F(x,t)=F(x)$
is a direct consequence of \eqref{eq7th2proof}. Let us check that it satisfies the equation
$\tilde \La F=0$. To this end, we shall use the representation \eqref{eq21th2}:
\[
F(x,s)=\E [F(Y_{Z_t})]
= \int dy \int_0^{\infty} du \int_0^t dv \, (A_{(v;y)}\theta_{\ge t})(v)G(u;x,s; y,v) F(y).
\]

We know that $F(x,t)=F(x)=\lim_{s\to t} F(x,s)$.
Setting $F(x,s)=F(x)$ for $s\ge t$, we make the function $F(x,s)$ continuous everywhere.

We want to show that
\begin{equation}
 \label{eq8th2proof}
L_{st}^{\be} F(x,s)+A_{(s;x)}F(s,x)=0.
\end{equation}

It is straightforward from \eqref{eq21th2} that $F(x,s)$ is smooth for $s<t$,
so that all terms in this equation are well defined. Let
\[
f(x,s;u,v)= \int dy \, G(u;x,s; y,v) F(y) (A_{(v;y)}\theta_{\ge t})(v), \quad u>0,
\]
so that $f(x,s;u,v)=0$ for $s\ge v$ and any $u>0$.
By the Kolmogorov's equation for the transition density,
\[
\frac{\pa G}{\pa u}= L_{st}^{\be} G+A_{(s;x)}G.
\]
Therefore
\[
\frac{\pa f}{\pa u}= L_{st}^{\be} f+A_{(s;x)}f.
\]
Let
\[
g(x,s; v)=\int_0^{\infty} f(x,s;u,v) \, du \,\, \text{for} \,\, s< v \,\, \text{and} \,\, g(x,s;v)=0 \,\, \text{for} \,\, s> v.
\]
Integrating the equation for $f$ over $u$ and using that $G(0;x,s;y,v)=\de(x-y)\de(s-v)$ yields the equation
\[
 (L_{st}^{\be}+A_{(s;x)})g(x,s; v)+ F(x) (A_{(v;x)}\theta_{\ge t})(v) \de (v-s)=0.
 \]

 Since
 \[
 F(x,s)=\int_s^t  g(x,s; v) dv, \quad s\le t \,\, \text{and} \,\, F(x,s)=F(x) \,\, \text{for} \,\, s\ge t,
 \]
 it follows that
 \[
 L_{st}^{\be} F(x,s)+\int_s^t A_{(s;x)}g(x,s; v) dv +F(x) (A_{(s;x)}\theta_{\ge t})(s)=0.
 \]

Here
\[
\int_s^t A_{(s;x)}g(x,s; v) dv+F(x) (A_{(s;x)}\theta_{\ge t})(s)
\]
\[
=\int_s^t dv \int_0^{t-s}  dr (g(x,s+r; v)-g(x,s;v))r^{-1-a(s,x) \al}
+(A_{(s;x)}\theta_{\ge t})(s) (F(x)-F(x,s)).
\]

Consequently, in order to establish \eqref{eq8th2proof}, it remains to show that
\begin{equation}
\label{eqproofsol}
 \int_s^t dv \int_0^{t-s}  dr (g(x,s+r; v)-g(x,s;v))r^{-1-a(s,x) \al}
 = \int_0^{t-s}  dr (F(x,s+r)-F(x,s))r^{-1-a(s,x) \al}.
\end{equation}

Let $\chi_{\ep}(s,r)$ be positive functions that coincide with $r^{-1-a(s,x) \al}$ for $r\le \ep$,
do not exceed these functions everywhere, and are continuous and bounded. Then to prove
\eqref{eqproofsol} it is sufficient to show
\begin{equation}
\label{eqproofsol1}
 \int_s^t dv \int_0^{t-s}  dr (g(x,s+r; v)-g(x,s;v))\chi_{\ep}(s,r)
 = \int_0^{t-s}  dr (F(x,s+r)-F(x,s))\chi_{\ep}(s,r),
\end{equation}
for any $\ep$.
The second terms cancel and this equation turns to
\begin{equation}
\label{eqproofsol2}
 \int_s^t dv \int_0^{t-s}  dr g(x,s+r; v)\chi_{\ep}(s,r)
 = \int_0^{t-s}  dr F(x,s+r)\chi_{\ep}(s,r).
\end{equation}
And this holds, because the l.h.s. is
\[
\int_0^{t-s}  dr  \int_{s+r}^t dv \, g(x,s+r; v)\chi_{\ep}(s,r)
=\int_0^{t-s}  dr \, F(x,s+r)\chi_{\ep}(s,r).
\]



{\bf Acknowledgements}. 

The author is grateful to the organisers of the 50th Memorial Barrett Lectures in the
University of Tennessee, May 2021, for inviting him to give a plenary talk, which strongly stimulated
his further research in fractional calculus.    

The author is grateful to the Simons foundation for the support of his residence at INI in Cambridge
during the program Fractional Differential Equations Jan-Apr 2022.

The work was supported by the Russian Science Foundation
under grant no. 20-11-20119.

\end{document}